\numberwithin{equation}{section}
\def\p{\partial}
\def\o{\overline}
\def\b{\bar}
\def\mb{\mathbb}
\def\mc{\mathcal}
\def\n{\nabla}
\def\t{\triangle}
\def\p{\partial}
\def\o{\overline}
\def\b{\bar}
\def\mb{\mathbb}
\def\mc{\mathcal}
\def\n{\nabla}
\def\t{\triangle}
\newtheorem{thm}{Theorem}[section]
\newtheorem{lemma}[thm]{Lemma}
\newtheorem{prop}[thm]{Proposition}
\theoremstyle{definition}
\theoremstyle{definition}
\newtheorem{defn}[thm]{Definition}
\newcommand{\be}{\begin{eqnarray}}
\newcommand{\ee}{\end{eqnarray}}
\newcommand{\comment}[1]{}
\begin{document}

\title{Compact K\"{a}hler manifolds with positive orthogonal bisectional curvature}

\author[Huitao Feng]{Huitao Feng$^1$}
\author[Kefeng Liu]{Kefeng Liu$^2$}
\author[Xueyuan Wan]{Xueyuan Wan$^3$}
\address{Huitao Feng: Chern Institute of Mathematics \& LPMC,
Nankai University, Tianjin, China}

\email{fht@nankai.edu.cn}

\address{Kefeng Liu: Department of Mathematics, UCLA, Los Angeles, USA}

\email{liu@math.ucla.edu}

\address{Xueyuan Wan: Chern Institute of Mathematics,
Nankai University, Tianjin, China}

\email{xywan@mail.nankai.edu.cn}

\thanks{$^1$~Partially supported by NSFC (Grant No. 11221091, 11271062, 11571184)}

\thanks{$^3$~Partially supported by NSFC (Grant No. 11221091,11571184)
and the Ph.D. Candidate Research Innovation Fund of Nankai University}

\begin{abstract}
In this short note, using Siu-Yau's method \cite{Siu}, we give a new proof that any $n$-dimensional compact K\"{a}hler manifold with positive orthogonal
 bisectional curvature must be biholomorphic to $\mb{P}^n$.
\end{abstract}
\maketitle

\setcounter{section}{-1}

\section{Introduction} \label{s0}

In the celebrated paper \cite{Siu}, Siu and Yau presented a differential geometric proof of the famous Frankel conjecture in K\"{a}hler geometry, which states that a compact K\"{a}hler manifold $M^n$ with positive bisectional curvature must be biholomorphic to $\mb{P}^n$. Using the method of Siu-Yau, Seaman \cite{Sea} in 1993 proved that any compact K\"{a}hler manifold $M^n$ with positive curvature on totally isotropic 2-planes must be biholomorphic to $\mb{P}^n$. On the other hand, there is also a concept of orthogonal bisectional curvature (cf. Definition \ref{defn.1} in this paper) for K\"{a}hler manifolds of dimension $n\geq 2$, which was introduced by Cao and Hamilton in the late 1980's. Note that the condition of positive orthogonal bisectional curvature is weaker than both the conditions of positive bisectional curvature and positive curvature on totally isotropic 2-planes for a K\"{a}hler manifold $M^n$ with $n\geq 2$. Hence a natural question is whether a compact K\"{a}hler manifold $M^n$ ($n\geq 2$) with positive orthogonal bisectional curvature is biholomorphic to $\mb{P}^n$. Inspired by an observation of Cao and Hamilton in an unpublished work\footnote{\text{Their proof has since appeared}, see Theorem 2.3 of \cite{Cao}.} that the nonnegativity of the orthogonal bisectional curvature is preserved under the K\"{a}hler-Ricci flow, Chen \cite{Chen} in 2007 gave a positive answer, under the extra condition $c_1(M)>0$, to this question by using the K\"{a}hler-Ricci flow method. Chen further asked whether this extra condition could be dropped. Later, H. Gu and Z. Zhang \cite{Gu} in 2010 showed that $c_1(M)>0$ holds for any compact K\"{a}hler manifold $M^n$ with positive orthogonal bisectional curvature. Therefore, by combining the results of Chen and Gu-Zhang, one has
\begin{thm}\label{thm.01} Let $M$ be an $n$-dimensional compact K\"{a}hler manifold with positive orthogonal bisectional curvature, then $M$ is biholomorphic to $\mb{P}^n$.
\end{thm}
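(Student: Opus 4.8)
\medskip

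\noindent\emph{Plan.} I would adapt the harmonic-sphere method of Siu and Yau \cite{Siu} to the present (orthogonal) setting: first exhibit a rational curve on $M$ by producing a stable harmonic two-sphere and showing it is $\pm$-holomorphic, and then recognize $M$ as $\mb{P}^n$. By the theorem of Gu and Zhang \cite{Gu}, positive orthogonal bisectional curvature forces $c_1(M)>0$, so $M$ is Fano and hence simply connected; since $b_2(M)\geq 1$, Hurewicz gives $\pi_2(M)\cong H_2(M;\Z)\neq 0$. By the theory of harmonic maps of Sacks and Uhlenbeck (see also \cite{Siu}) there is then a non-constant harmonic map $f\colon S^2\to M$ that is stable --- for instance one of least area among non-constant such maps --- and I write $C=f(S^2)$, $d=c_1(M)\cdot[C]>0$.

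\medskip

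\noindent\emph{The sphere is $\pm$-holomorphic.} This is the core step. Because $f$ is harmonic, $(f^{*}\nabla)^{0,1}$ endows $f^{*}T^{1,0}M$ with a holomorphic structure for which $\partial f$ is a holomorphic section of $K_{S^2}\otimes f^{*}T^{1,0}M$ and $\bar\partial f$ an anti-holomorphic section of $\overline{K}_{S^2}\otimes f^{*}T^{1,0}M$. I would write the Hermitian second variation of the energy along complex fields $V\in\Gamma(f^{*}T^{1,0}M)$; after integration by parts and use of the harmonic map equation it becomes the sum of a non-negative term in the covariant derivatives of $V$ (which vanishes when $V$ is holomorphic) and a curvature term, of the opposite sign, obtained by contracting $R(V,\bar V,\cdot,\cdot)$ against $\partial f$ and $\bar\partial f$. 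Assume $f$ is neither holomorphic nor anti-holomorphic, so $\partial f\not\equiv 0$ and $\bar\partial f\not\equiv 0$. Away from the finite set where these vanish or become proportional (treated by a separate argument), let $Q\subset f^{*}T^{1,0}M$ be the orthogonal complement of $\partial f$ and $\bar\partial f$: for fields $V$ valued in $Q$, the contracted curvatures are all \emph{orthogonal} bisectional curvatures of $M$ and hence strictly positive, so the curvature term is negative. Choosing such $V$ holomorphic modulo the (lower-order) orthogonal-lift correction, so that the non-negative term --- now a second-fundamental-form integral --- is dominated by the curvature term, and running over a family of such $V$ that spans $Q$ fibrewise, which a Riemann--Roch estimate supplies once one knows (again from the curvature hypothesis) that $f^{*}T^{1,0}M$ is positive enough, I get total second variation $<0$, contradicting stability. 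Therefore $f$ is $\pm$-holomorphic, and $C$ is a rational curve on $M$.

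\medskip

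\noindent\emph{From a rational curve to $\mb{P}^n$.} Now $M$ is Fano and carries a rational curve; let $C_0$ be one of minimal anticanonical degree. Mori's bend-and-break gives $-K_M\cdot C_0\leq n+1$, whereas the minimal rational curves through a general point $p$ form a family of dimension $-K_M\cdot C_0-2$, which the positivity of the (orthogonal) bisectional curvature --- via a second-variation argument for the normal bundle of $C_0$ --- forces to be at least $n-1$; hence $-K_M\cdot C_0=n+1$. A Fano $n$-fold admitting, through a general point, such a maximal family of minimal rational curves is biholomorphic to $\mb{P}^n$ by the characterization of projective space (Cho--Miyaoka--Shepherd-Barron, or the original line-geometry argument of Siu and Yau). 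This would complete the proof of Theorem~\ref{thm.01}.

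\medskip

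\noindent\emph{Main obstacle.} The decisive difficulty is the core step. Siu--Yau's original computation invokes the \emph{full} holomorphic bisectional curvature of $M$, which is not assumed positive here, so one must reorganize the destabilizing term so that only orthogonal bisectional curvatures appear --- concretely, confine the variation fields to the (merely $C^\infty$, not holomorphic) orthogonal complement of $\partial f$ and $\bar\partial f$. This forces one to deal with: (i) the choice of a positive twist of $Q$ guaranteeing enough sections; (ii) showing the curvature term dominates the second-fundamental-form correction introduced by the orthogonal lift; and (iii) the branch points of $f$, the locus where $\partial f\parallel\bar\partial f$, and small values of $n$. A further essential input is the Gu--Zhang theorem used at the start, since positive orthogonal bisectional curvature does not pointwise imply positive Ricci curvature.
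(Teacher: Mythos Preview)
Your overall strategy---Sacks--Uhlenbeck sphere, $\pm$-holomorphicity from the second variation, then a projective-space recognition theorem---matches the paper's. But you miss the key simplification in the core step, and as written your argument there is incomplete.

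\medskip

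\textbf{The core step.} You propose to vary along sections of the $C^\infty$ orthogonal complement $Q$ of both $\partial f$ and $\bar\partial f$, then fight the second-fundamental-form correction with Riemann--Roch estimates. The paper avoids all of this with a single observation (already in Siu and Futaki): for a harmonic $f\colon\mb{P}^1\to M$, the fields
\[
X:=v\,f^{\alpha}_{w}\,\partial_{z^\alpha},\qquad Y:=\bar v\,f^{\alpha}_{\bar w}\,\partial_{z^\alpha}
\]
(with $v\,\partial_w$ a nonzero holomorphic vector field on $\mb{P}^1$) satisfy $\frac{D}{\partial\bar w}X=\frac{D}{\partial w}Y=0$ by the harmonic map equation, hence $\langle X,Y\rangle$ is holomorphic on $\mb{P}^1$ with a zero, so $\langle X,Y\rangle\equiv 0$. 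Thus $\partial f$ and $\bar\partial f$ are \emph{already} orthogonal. Taking the variation direction $V=Y$ itself, the derivative term in the second variation vanishes identically (since $\frac{D}{\partial w}Y=0$) and the curvature term is exactly
\[
-2\int_{\mb{P}^1}|v|^{-2}\,R(X,\bar X,Y,\bar Y)\,\sqrt{-1}\,dw\wedge d\bar w,
\]
an integral of \emph{orthogonal} bisectional curvatures, hence strictly negative whenever $f$ is neither holomorphic nor anti-holomorphic. Your difficulties (i)--(iii) simply disappear: no twist, no second-fundamental-form estimate, and the branch points are isolated zeros of $X$ or $Y$ and cost nothing in the integral.

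\medskip

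\textbf{The recognition step.} The paper does not invoke Mori bend-and-break or Cho--Miyaoka--Shepherd-Barron. Instead it shows directly (Lemma~\ref{prop.3}) that every holomorphic quotient line bundle of $f^*TM$ is positive---again using the orthogonality trick $\langle s^*,X\rangle\equiv 0$ for a holomorphic section $s$ of the dual. Grothendieck splitting then yields $c_1(TM)\cdot[f_0(\mb{P}^1)]\ge n+1$, and since $b_2(M)=1$ (proved via Bochner--Kodaira), Kobayashi--Ochiai gives $M\cong\mb{P}^n$. You also need to address the Sacks--Uhlenbeck bubble tree: the minimizing representative of the generator $g\in\pi_2(M)$ may split as $\sum f_i$; the paper handles the case $k>0$ (some $f_i$ holomorphic, some anti-holomorphic) via a deformation argument analogous to Siu--Yau's Proposition~3, again powered by Lemma~\ref{prop.3}.
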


In this note, we will give a new proof of the above theorem by using Siu-Yau's method \cite{Siu}, which answers a question of Chen raised in \cite{Chen}. Note that Chen's proof depends heavily on the K\"{a}hler-Ricci flow techniques. More precisely, by assuming $c_1(M)>0$, he can show that a K\"{a}hler metric with positive orthogonal bisectional curvature flows to a metric of positive holomorphic bisectional curvature, and then he got a proof of Theorem 0.1 by using Siu-Yau's result. Compared with Chen's proof, our proof is more direct and geometric.

This note is organized as follows. In Section 1, by computing directly the second variation of the energy of maps $f:\mb{P}^1\to (M,h)$, we prove that an energy minimal map $f$ must be holomorphic or conjugate holomorphic when $(M,h)$ is a compact K\"{a}hler manifold with positive orthogonal bisectional curvature, which is the key step of our proof of Theorem 0.1. In Section 2, we will complete the proof of Theorem 0.1 by using Siu-Yau's method \cite{Siu}.

\

{\bf Acknowledgements.} The authors would like to thank Professor Xiaokui Yang for his helpful suggestions in preparing this paper. The authors would like to thank the reviewers for their comments that help improve the paper.

\section{Complex Analyticity of Energy Minimizing Maps}

In this section, we first compute the first and second variations of the total energy of smooth maps $f:\mb{P}^1\to (M,h)$, and then prove the main result of this section that an energy minimal map $f$ must be holomorphic or conjugate holomorphic when $(M,h)$ is a compact K\"{a}hler manifold with positive orthogonal bisectional curvature. Note that Siu-Yau \cite{Siu} used $\bar{\partial}$-energy in their proof.

Let $\mb{P}^1$ be the complex projective space of complex dimension one with a fixed conformal structure $\omega$ and $M$ a compact K\"{a}hler manifold with a K\"{a}hler metric $h$. In local holomorphic coordinates on $\mb{P}^1$ and $M$, we write $\omega$ and $h$ as following respectively:
$$\omega=\lambda^2 dw\otimes d\b{w},$$
$$h=\sum_{\alpha,\beta=1}^n h_{\alpha\b{\beta}}dz^{\alpha}\otimes d\b{z}^{\beta}.$$
For any smooth map $f:(\mb{P}^1,\omega)\to (M,h)$, the energy of $f$ can be written, with respect to  $h$, as following:
\begin{align}\label{energy.1}
  E(f)=\int_{\mb{P}^1}\left\langle\frac{\p f}{\p w},\frac{\p f}{\p w}\right\rangle\sqrt{-1}dw\wedge d\b{w}=\int_{\mb{P}^1}\left(f^{\alpha}_w\o{f^{\beta}_w}+f^{\alpha}_{\b{w}}\o{f^{\beta}_{\b{w}}}\right)h_{\alpha\b{\beta}}\sqrt{-1}dw\wedge d\b{w},
\end{align}
where the summation convention is used, and
\begin{align}\label{f1}
f^{\alpha}_w=\frac{\p f^{\alpha}}{\p w},\quad f^{\alpha}_{\b{w}}=\frac{\p f^{\alpha}}{\p\b{w}},
\end{align}
\begin{align}\label{f2}
\frac{\p f}{\p w}:=f_*\left(\frac{\p}{\p w}\right)=f^{\alpha}_w\frac{\p}{\p z^{\alpha}}+\o{f^{\alpha}_{\b w}}\frac{\p}{\p\b{z}^{\alpha}}.
\end{align}
Let $\n^{Ch}$ be the Chern connection with respect to the K\"{a}hler  metric $h$. The Christoffel symbols $\Gamma^{\alpha}_{\beta\gamma}$ are defined by
$$\n^{Ch}_{\frac{\p}{\p z^{\beta}}}\frac{\p}{\p z^{\gamma}}=\Gamma^{\alpha}_{\beta\gamma}\frac{\p}{\p z^{\alpha}},$$
 and given by
 $$\Gamma^{\alpha}_{\beta\gamma}=h^{\b{\delta}\alpha}\frac{\p h_{\beta\b{\delta}}}{\p z^{\gamma}}.$$

Let $f(t):\mb{P}^1\to M$, $t\in\mb{C}$, $|t|<\epsilon$, be a family of smooth maps parametrized by an open disc in $\mb{C}$. Denote by $D$ the pullback connection $f(t)^*\n^{Ch}$, then by definition,
\begin{align}
\frac{D}{\p w}=\n^{Ch}_{f(t)_*\frac{\p}{\p w}},\quad \frac{D}{\p\b{w}}=\n^{Ch}_{f(t)_*\frac{\p}{\p\b{w}}},\quad  \frac{D}{\p t}=\n^{Ch}_{f(t)_*\frac{\p}{\p t}}, \quad\frac{D}{\p\b{t}}=\n^{Ch}_{f(t)_*\frac{\p}{\p\b{t}}}.
\end{align}

 Therefore,
\begin{align}
\begin{split}
  dE(f)\left(\frac{\p}{\p t}\right)&=\int_{\mb{P}^1}\left(\left\langle\frac{D}{\p t}\left(\frac{\p f}{\p w}\right),\frac{\p f}{\p w}\right\rangle+\left\langle\frac{\p f}{\p w},\frac{D}{\p\b{t}}\left(\frac{\p f}{\p w}\right)\right\rangle\right)\sqrt{-1}dw\wedge d\b{w}\\
  &=\int_{\mb{P}^1}\left(\left\langle\frac{D}{\p w}\left(\frac{\p f}{\p t}\right),\frac{\p f}{\p w}\right\rangle+\left\langle\frac{\p f}{\p w},\frac{D}{\p w}\left(\frac{\p f}{\p\b{t}}\right)\right\rangle\right)\sqrt{-1}dw\wedge d\b{w}\\
  &=-\int_{\mb{P}^1} \left(\left\langle\frac{\p f}{\p t},\frac{D}{\p\b{w}}\left(\frac{\p f}{\p w}\right)\right\rangle+\left\langle\frac{\p f}{\p t},\frac{D}{\p w}\left(\frac{\p f}{\p \b{w}}\right)\right\rangle\right)\sqrt{-1}dw\wedge d\b{w}\\
  &=-2\int_{\mb{P}^1}\left\langle\frac{\p f}{\p t},\frac{D}{\p\b{w}}\left(\frac{\p f}{\p w}\right)\right\rangle\sqrt{-1}dw\wedge d\b{w},
  \end{split}
\end{align}
where the first  identity uses the fact that $D$ is compatible with the metric, the second and fourth equalities are  by the torsion-freeness of $D$.

The harmonic map equation is
\begin{align}\label{energy.2}
  \frac{D}{\p\b{w}}\left(\frac{\p f}{\p w}\right)=0 \quad \,\text{or}\quad \frac{D}{\p w}\left(\frac{\p f}{\p\b{w}}\right)=0,
\end{align}
which is equivalent to
\begin{align}\label{harmonic}
  \frac{\p^2 f^{\alpha}}{\p w\p\b{w}}+\Gamma^{\alpha}_{\beta\gamma}f^{\gamma}_{\b{w}}f^{\beta}_w=0.
\end{align}

In the following, we compute the second variation of the energy (\ref{energy.1}). Note that similar formula has been derived by Moore from a different point of view (\cite{Moore} , (6)). Suppose that $f$ is a harmonic map, i.e. $f$ satisfies (\ref{energy.2}), then
\begin{align}\label{energy.3}
\begin{split}
  D^2E(f)\left(\frac{\p}{\p t},\frac{\p}{\p\b{t}}\right)&=-2\int_{\mb{P}^1}\left\langle \frac{\p f}{\p t},\frac{D}{\p t}\frac{D}{\p\b{w}}\left(\frac{\p f}{\p w}\right)\right\rangle\sqrt{-1}dw\wedge d\b{w}\\
  &=-2\int_{\mb{P}^1}\left(\left\langle\frac{\p f}{\p t}, R\left(\frac{\p f}{\p t},\frac{\p f}{\p\b{w}}\right)\frac{\p f}{\p w}\right\rangle
  +\left\langle \frac{\p f}{\p t},\frac{D}{\p\b{w}}\frac{D}{\p t}\left(\frac{\p f}{\p w}\right)\right\rangle\right)\sqrt{-1}dw\wedge d\b{w}\\
  &=2\int_{\mb{P}^1}\left(\left\langle\frac{D}{\p w}\frac{\p f}{\p t},\frac{D}{\p w}\frac{\p f}{\p t}\right\rangle
  -\left\langle R\left(\frac{\p f}{\p\b{t}},\frac{\p f}{\p w}\right)\frac{\p f}{\p\b{w}},\frac{\p f}{\p \b{t}}\right\rangle\right)\sqrt{-1}dw\wedge d\b{w}\\
  &\quad -2\int_{\mb{P}^1}\frac{\p}{\p w}\left\langle\frac{\p f}{\p t},\frac{D}{\p t}\frac{\p f}{\p w}\right\rangle\sqrt{-1}dw\wedge d\b{w}\\
  &=2\int_{\mb{P}^1}\left(\left\langle\frac{D}{\p w}\frac{\p f}{\p t},\frac{D}{\p w}\frac{\p f}{\p t}\right\rangle
  -\left\langle R\left(\frac{\p f}{\p\b{t}},\frac{\p f}{\p w}\right)\frac{\p f}{\p\b{w}},\frac{\p f}{\p \b{t}}\right\rangle\right)\sqrt{-1}dw\wedge d\b{w}\\
  &=2\int_{\mb{P}^1}\left(\left|\frac{D}{\p w}\left(\frac{\p f}{\p t}\right)\right|^2-\left\langle R\left(\frac{\p f}{\p\b{t}},\frac{\p f}{\p w}\right)\frac{\p f}{\p\b{w}},\frac{\p f}{\p \b{t}}\right\rangle\right)
  \sqrt{-1}dw\wedge d\b{w},
  \end{split}
\end{align}
where the second equality is by definition of  curvature operator
$$R\left(\frac{\p f}{\p\b{t}},\frac{\p f}{\p w}\right)=\frac{D}{\p\b{t}}\frac{D}{\p w}-\frac{D}{\p w}\frac{D}{\p\b{t}},$$
the fourth equality is given by the Stoke's Theorem.

 By (\ref{f2}) and
 $$\frac{\p f}{\p t}=f^{\alpha}_t\frac{\p}{\p z^{\alpha}}+\o{f^{\alpha}_{\b{t}}}\frac{\p}{\p\b{z}^{\alpha}},$$
 we see that the curvature term in (\ref{energy.3}) is given by
\begin{align}\label{energy.4}
\begin{split}
  \left\langle R\left(\frac{\p f}{\p\b{t}},\frac{\p f}{\p w}\right)\frac{\p f}{\p\b{w}},\frac{\p f}{\p \b{t}}\right\rangle
  &=R_{\alpha\b{\beta}\gamma\b{\delta}}\left(f^{\alpha}_{\b{t}}\o{f^{\beta}_{\b{w}}}
  -f^{\alpha}_{w}\o{f^{\beta}_{t}}\right)\left(f^{\gamma}_{\b{w}}\o{f^{\delta}_{\b{t}}}-f^{\gamma}_{t}\o{f^{\delta}_{w}}\right),
\end{split}
\end{align}
since $\langle R(X,Y)Z,W\rangle$ depends on $X,Y,Z$ linearly, and it depends on $W$ conjugate linearly,
where $ R_{\alpha\b{\beta}\gamma\b{\delta}}$ is given by
\begin{align}
  R_{\alpha\b{\beta}\gamma\b{\delta}}:=\left\langle R\left(\frac{\p}{\p z^{\alpha}},\frac{\p}{\p \b{z}^{\beta}}\right)\frac{\p }{\p z^{\gamma}},\frac{\p}{\p z^{\delta}}\right\rangle=-\frac{\p^2 h_{\gamma\b{\delta}}}{\p z^{\alpha}\p\b{z}^{\beta}}+h^{\b{\tau}\sigma}\frac{\p h_{\gamma\b{\tau}}}{\p z^{\alpha}}\frac{\p h_{\sigma\b{\delta}}}{\p\b{z}^{\beta}}.
\end{align}
\\

Before giving the main result, we first recall the definition of positive orthogonal bisectional curvature.

\begin{defn}\label{defn.1}
  A K\"{a}hler manifold $(M,h)$ of dimension $n\geq 2$ is said of positive orthogonal bisectional curvature if
\begin{align}\label{R}
R(X,\b{X},Y,\b{Y})>0
\end{align}
for any nonzero vectors $X,Y\in T^{1,0}M$ with $\langle X,Y\rangle=0$.
\end{defn}

The main result of this section is the following Theorem 1.2, which is also the key step in our proof of Theorem 0.1. The key point in the proof of Theorem 1.2 is that holomorphic and conjugate holomorphic sections are actually orthogonal to each other, which has been observed already in Siu \cite{Siu1} and Futaki \cite{Fut}.

\begin{thm}\label{thm.1}
 Let $(M,h)$ be a compact K\"{a}hler manifold of dimension $n\geq 2$ with positive orthogonal bisectional curvature. Then any energy minimizing map $f:\mb{P}^1\to M$ must be holomorphic or conjugate holomorphic.
\end{thm}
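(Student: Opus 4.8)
The plan is to follow the Siu--Yau strategy via the second variation formula \eqref{energy.3}. Suppose $f:\mb{P}^1\to M$ is energy minimizing, hence harmonic, and suppose for contradiction that $f$ is neither holomorphic nor conjugate holomorphic. The idea is to exhibit a variation of $f$ along which the second variation \eqref{energy.3} is \emph{negative}, contradicting minimality. Following Siu--Yau and the remark preceding the theorem, the natural variations to use are the ones generated by the holomorphic part $\partial f$ and the conjugate holomorphic part $\bar\partial f$ of the differential of $f$. Concretely, write the $(1,0)$-part of $df$ as the section $f^\alpha_w\,\frac{\p}{\p z^\alpha}$ of $f^*T^{1,0}M$ (twisted by the appropriate power of the canonical bundle of $\mb{P}^1$), and similarly $\o{f^\alpha_{\b w}}\,\frac{\p}{\p z^\alpha}$ for the antiholomorphic part. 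Since $f$ is harmonic, these are (anti)holomorphic sections of the pulled-back bundle, and one takes the test variation $\frac{\p f}{\p t}$ to be built from (a suitable linear combination, over the space of sections, of) these.

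The key geometric input, flagged in the text as the observation of Siu \cite{Siu1} and Futaki \cite{Fut}, is that the image of $\partial f$ and the image of $\bar\partial f$ are \emph{pointwise orthogonal}: $\langle f^\alpha_w\frac{\p}{\p z^\alpha},\ \o{f^\beta_{\b w}}\frac{\p}{\p z^\beta}\rangle = f^\alpha_w\,\o{\,\o{f^\beta_{\b w}}\,}\,h_{\alpha\b\beta}$ is (up to unit factors) a holomorphic function on $\mb{P}^1$ and hence constant; since it vanishes somewhere (generically) it vanishes identically. This orthogonality is what makes the bisectional curvature term in \eqref{energy.4} effectively an \emph{orthogonal} bisectional curvature, so that the hypothesis of Definition \ref{defn.1} applies. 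First I would establish this orthogonality carefully (including the degenerate cases where one of $\partial f$, $\bar\partial f$ vanishes on an open set, which cannot happen for a nonconstant harmonic map by unique continuation). Then I would plug the chosen variation into \eqref{energy.3}, use holomorphicity/antiholomorphicity of the test section to control or kill the $\bigl|\frac{D}{\p w}\frac{\p f}{\p t}\bigr|^2$ term after integrating against an appropriate family of sections (as in Siu--Yau, one integrates over sections or uses the fact that a holomorphic section $s$ has $\frac{D}{\p\b w}s=0$ so the covariant derivative term can be traded), and show the curvature integrand is $\geq$ a positive multiple of $R(X,\b X,Y,\b Y)$ with $X \perp Y$.

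More precisely, the mechanism is: take $\frac{\p f}{\p t} = \o{f^\alpha_{\b w}}\frac{\p}{\p z^\alpha}$ as a section-valued variation (this requires twisting by the right line bundle on $\mb{P}^1$, which by degree reasons admits the needed sections — exactly the point where $\mb{P}^1$ rather than a higher genus curve matters). With this choice, in \eqref{energy.4} the terms $f^\alpha_w\o{f^\beta_t}$ and $f^\gamma_t\o{f^\delta_w}$ combine so that, using $\partial f \perp \bar\partial f$, the curvature term reduces to $R_{\alpha\b\beta\gamma\b\delta}f^\alpha_{\b t}\o{f^\beta_{\b w}}f^\gamma_{\b w}\o{f^\delta_{\b t}}$-type expression $= R(V,\b V, W, \b W)$ with $V = \partial f$-direction and $W$ related to $\bar\partial f$-direction, and $\langle V, W\rangle = 0$ by the orthogonality. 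Hence positive orthogonal bisectional curvature forces this term to be strictly positive wherever both $\partial f$ and $\bar\partial f$ are nonzero, making $D^2E(f) < 0$ once the gradient term is shown to vanish or be dominated. The conclusion is that at a minimum we cannot have both $\partial f \neq 0$ and $\bar\partial f \neq 0$ on a set of positive measure, i.e. $f$ is holomorphic or conjugate holomorphic.

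The main obstacle I anticipate is the bookkeeping around the covariant-derivative term $\bigl|\frac{D}{\p w}\frac{\p f}{\p t}\bigr|^2$ in \eqref{energy.3}: one must choose the variation so that this term does not swamp the negative curvature term. Siu--Yau handle this by working with $\bar\partial$-energy and by integrating the variation over the (finite-dimensional, by Riemann--Roch on $\mb{P}^1$) space of holomorphic sections of the relevant twisted bundle, so that an averaging argument produces a genuinely negative second variation for \emph{some} section; replicating this averaging in the present energy (rather than $\bar\partial$-energy) setup, and verifying that the cross terms and the twist by $K_{\mb{P}^1}$ behave correctly, is the delicate part. A secondary subtlety is ensuring the orthogonality $\partial f \perp \bar\partial f$ genuinely holds in the relevant twisted sense and handling unique continuation so that ``nonholomorphic and nonconjugate-holomorphic'' really does give an open set on which both parts are nonzero.
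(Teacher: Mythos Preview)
Your outline is essentially the paper's own argument, but the ``main obstacle'' you flag is not actually there. With the variation you propose (after correcting the typo $\o{f^\alpha_{\b w}}$ to $f^\alpha_{\b w}$ and twisting by a global holomorphic vector field $v\frac{\p}{\p w}$ on $\mb{P}^1$ so that $Y:=\b v\,f^\alpha_{\b w}\frac{\p}{\p z^\alpha}$ is a genuine section of $f^*T^{1,0}M$), the harmonic map equation \eqref{harmonic} gives
\[
\frac{D}{\p w}Y=\b v\left(\frac{\p^2 f^\alpha}{\p w\p\b w}+\Gamma^\alpha_{\beta\gamma}f^\gamma_w f^\beta_{\b w}\right)\frac{\p}{\p z^\alpha}=0
\]
identically, so the term $\bigl|\frac{D}{\p w}\frac{\p f}{\p t}\bigr|^2$ in \eqref{energy.3} simply vanishes. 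No averaging over a space of sections is needed; the second variation reduces directly to $-2\int_{\mb{P}^1}|v|^{-2}R(X,\b X,Y,\b Y)\,\sqrt{-1}\,dw\wedge d\b w$ with $X=v f^\alpha_w\frac{\p}{\p z^\alpha}$.

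Your orthogonality step is also right in spirit but needs the twist to work globally: $\langle X,Y\rangle$ is a well-defined \emph{function} on $\mb{P}^1$ (whereas $\langle f^\alpha_w\p_\alpha,f^\beta_{\b w}\p_\beta\rangle$ is only a local expression), it is holomorphic since $\frac{D}{\p\b w}X=0=\frac{D}{\p w}Y$, and it vanishes at the zeros of $v$ (any nonzero holomorphic section of $T\mb{P}^1\cong\mc O(2)$ has zeros), hence $\langle X,Y\rangle\equiv 0$. This is the precise mechanism, not a ``generic vanishing''. With these two points in hand the contradiction is immediate and the proof is complete.
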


\begin{proof} Let $f$ be an energy minimizing map. If $f$ is neither holomorphic nor conjugate holomorphic, then we will get a contradiction. To show this, noticing that $\dim H^0(\mb{P}^1, T\mb{P}^1)=3$, we can take a nonzero holomorphic vector field $v\frac{\p}{\p w}$ of $\mb{P}^1$.
Then by the assumption that $f$ is neither holomorphic nor conjugate holomorphic, the following two vector fields of type $(1,0)$
\begin{align}\label{XY}
X:=\left[f_*\left(v\frac{\p}{\p w}\right)\right]^{(1,0)}=v\frac{\p f^{\alpha}}{\p w}\frac{\p}{\p z^{\alpha}},\quad Y:=\left[\o{f_*\left(v\frac{\p}{\p w}\right)}\right]^{(1,0)}=\b{v}\frac{\p f^{\alpha}}{\p\b{w}}\frac{\p}{\p z^{\alpha}}
\end{align}
are nonzero on $M$, where $[\bullet]^{(1,0)}$ denotes the $(1,0)$-part of a vector field. When there is no confusion, we also view $X,Y$ as two sections of the pull-back bundle $f^*TM$ over $\mb{P}^1$. Since $f$ is a harmonic map,
i.e., it satisfies (\ref{harmonic}), we have
  \begin{align}\label{1.3}
    \frac{D}{\p w}Y=\b{v}\frac{D}{\p w}\left(\frac{\p f^{\alpha}}{\p\b{w}}\frac{\p}{\p z^{\alpha}}\right)=\b{v}\left(\frac{\p^2 f^{\alpha}}{\p w\p\b{w}}+f^{\gamma}_w f^{\beta}_{\b{w}}\Gamma^{\alpha}_{\beta\gamma}\right)\frac{\p}{\p z^{\alpha}}=0.
  \end{align}
  Now we take such variation direction,
  \begin{align}
    \frac{\p f(t)}{\p t}|_{t=0}=Y.
  \end{align}
Therefore,
  \begin{align}
   \left(\frac{D}{\p w}\frac{\p f}{\p t}\right)|_{t=0}=\frac{D}{\p w}\left(\frac{\p f}{\p t}|_{t=0}\right)=\frac{D}{\p w}Y=0.
  \end{align}
Now by the second variation formula (\ref{energy.3}), we have
 \begin{align}\label{ef}
  \begin{split}
      \frac{\p^2 E(f)}{\p t\p\b{t}}&=-2\int_{\mb{P}^1}|v|^2R_{\alpha\b{\beta}\gamma\b{\delta}}f^{\alpha}_w\o{f^{\beta}_{\b{w}}}
      f^{\gamma}_{\b{w}}\o{f^{\delta}_w}\sqrt{-1}dw\wedge d\b{w}\\
      &=-2\int_{\mb{P}^1}|v|^{-2} R(X,\b{X},Y,\b{Y})\sqrt{-1}dw\wedge d\b{w},
  \end{split}
\end{align}
where $|v|^{-2}R(X,\b{X},Y,\b{Y})$ is taken to be zero at the zero points of $v$.

By (\ref{harmonic}) and (\ref{XY}), we have
\begin{align}\label{1.2}
  \frac{D}{\p\b{w}}X=v\left(\frac{\p^2 f^{\alpha}}{\p w\p\b{w}}+f^{\gamma}_w f^{\beta}_{\b{w}}\Gamma^{\alpha}_{\beta\gamma}\right)\frac{\p}{\p z^{\alpha}}=0.
\end{align}

By (\ref{1.3}) and (\ref{1.2}), we get
  \begin{align}
    \frac{\p}{\p\b{w}}\langle X,Y\rangle=\left\langle\frac{D}{\p\b{w}}X,Y\right\rangle+\left\langle X,\frac{D}{\p w}Y\right\rangle=0,
  \end{align}
which implies that  $\langle X,Y\rangle$ is a holomorphic function on $\mb{P}^1$. Since $\langle X,Y\rangle$ has zero points on $\mb{P}^1$, thus
  \begin{align}\label{complex.4}
    \langle X,Y\rangle\equiv 0,
  \end{align}
that is, the type-$(1,0)$ vector fields $X,Y$ are orthogonal to each other and has only finite number of common zero points on $M$. Now by the assumption of positive orthogonal bisectional curvature, we get
  \begin{align}
    \frac{\p^2 E(f)}{\p t\p\b{t}}=-2\int_{\mb{P}^1}|v|^{-2} R(X,\b{X},Y,\b{Y})\sqrt{-1}dw\wedge d\b{w}<0.
  \end{align}
On the other hand, since $f$ is energy minimal, we get $\frac{\p^2 E(f)}{\p t\p\b{t}}\geq 0$, which gives a contradiction.
Thus $f$ must be a holomorphic or conjugate holomorphic map.
\end{proof}

\section{The Proof of Theorem 0.1}

In this section, we prove Theorem 0.1 by using Siu-Yau's method. To do this, we first give following two propositions.

The first proposition has been proved in \cite{Gu}. However, for reader's convenience, we present a direct proof of it, in which we will use the Bishop-Goldberg's original statements (cf. \cite{Bishop} or \cite{Siu}, Theorem 3) to deal with the part of parallel real harmonic forms.
\begin{prop}\label{Pro 1}
  Let $(M^n,h)$, $n\geq 2$, be a compact K\"{a}hler manifold with positive orthogonal bisectional curvature. Then the second betti number
  $$b_2(M)=1$$
  and the first Chern class $c_1(M)$ is positive.
\end{prop}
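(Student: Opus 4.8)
\emph{Overall strategy.} The plan is to combine Hodge theory with the Bochner technique. On the compact K\"ahler manifold $M$ one has $b_2(M)=2h^{2,0}(M)+h^{1,1}(M)$ (using $h^{0,2}=h^{2,0}$), and every harmonic $2$-form splits into harmonic forms of pure bidegree; since $[\omega]$ already gives a nonzero class in $H^{1,1}$, everything reduces to proving $h^{1,1}(M)=1$ and $h^{2,0}(M)=0$. Once this is known, $c_1(M)\in H^{1,1}(M;\R)=\R[\omega]$, so $c_1(M)=\lambda[\omega]$ for a unique real $\lambda$, and one reads off the sign of $\lambda$ by pairing with $[\omega]^{n-1}$.

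\emph{Showing $h^{1,1}=1$.} Let $\varphi$ be a harmonic real $(1,1)$-form. I would apply the Bochner formula $\tfrac12\Delta|\varphi|^2=|\nabla\varphi|^2+Q(\varphi)$. Choosing at each point an $h$-unitary frame diagonalizing $\varphi$, say $\varphi=\sqrt{-1}\sum_\alpha\lambda_\alpha\,dz^\alpha\wedge d\bar z^\alpha$ with $\lambda_\alpha$ real, the curvature term has the shape $Q(\varphi)=c\sum_{\alpha,\beta}R_{\alpha\bar\alpha\beta\bar\beta}(\lambda_\alpha-\lambda_\beta)^2$ for a positive constant $c$ (the terms with $\alpha=\beta$ dropping out). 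Since $\partial/\partial z^\alpha$ and $\partial/\partial z^\beta$ are orthogonal for $\alpha\neq\beta$, Definition \ref{defn.1} gives $R_{\alpha\bar\alpha\beta\bar\beta}>0$, so $Q(\varphi)\geq0$ pointwise; integrating the Bochner identity over $M$ then forces $Q(\varphi)\equiv0$, hence $\nabla\varphi=0$ and $\lambda_1=\dots=\lambda_n$ everywhere, i.e. $\varphi=f\omega$ for a smooth function $f$. Then $d\varphi=0$ and $n\geq2$ give $df=0$, so $\varphi\in\R\omega$. (Equivalently, once $\varphi$ is seen to be parallel, one may invoke the Bishop--Goldberg statement recalled as Theorem 3 of \cite{Siu}.) Thus $h^{1,1}(M)=1$.

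\emph{Showing $h^{2,0}=0$ and $c_1(M)>0$.} A harmonic $(2,0)$-form on a compact K\"ahler manifold is a holomorphic $2$-form, i.e. a holomorphic section of $E:=\Lambda^{2}(T^{1,0}M)^*$. I would show that the mean (Chern) curvature $\widehat\Theta_E$ of $E$ is negative definite; then the Bochner identity for a holomorphic section $s$, namely $\int_M\big(|\nabla^{1,0}s|^2-\langle\widehat\Theta_E s,s\rangle\big)\,dV=0$, forces $s\equiv0$, so $h^{2,0}(M)=0$. Now the mean curvature of $T^{1,0}M$ is the Ricci endomorphism, hence $\widehat\Theta_E$ is diagonal in a Ricci eigenbasis $e_1,\dots,e_n$ (with eigenvalues $r_1,\dots,r_n$) with eigenvalues $-(r_i+r_j)$, $i<j$; so it suffices to prove $r_i+r_j=\mathrm{Ric}(e_i,\bar e_i)+\mathrm{Ric}(e_j,\bar e_j)>0$ for any orthonormal $e_i,e_j\in T^{1,0}M$. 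Expanding the Ricci curvature in a unitary basis containing $e_i,e_j$, this sum equals
$$H(e_i)+H(e_j)+2R(e_i,\bar e_i,e_j,\bar e_j)+\sum_{k\neq i,j}\big(R(e_i,\bar e_i,e_k,\bar e_k)+R(e_j,\bar e_j,e_k,\bar e_k)\big),$$
where $H(\cdot)$ is the holomorphic sectional curvature; every summand except possibly $H(e_i)+H(e_j)$ is positive by Definition \ref{defn.1}. Finally $H(e_i)+H(e_j)>0$ follows by polarization: for every $\phi$ the vectors $\tfrac1{\sqrt2}(e_i\pm e^{\sqrt{-1}\phi}e_j)$ are orthonormal, so $R\big(\tfrac1{\sqrt2}(e_i+e^{\sqrt{-1}\phi}e_j),\cdot,\tfrac1{\sqrt2}(e_i-e^{\sqrt{-1}\phi}e_j),\cdot\big)>0$; expanding by multilinearity, averaging over $\phi$, and using the K\"ahler symmetries of $R$, the mixed bisectional terms cancel and one is left with $\tfrac14\big(H(e_i)+H(e_j)\big)>0$. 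This yields $h^{2,0}(M)=0$, hence $b_2(M)=1$; moreover, summing $r_i+r_j>0$ over all $i\neq j$ gives $2(n-1)\,\mathrm{Scal}>0$ pointwise, so $c_1(M)\cdot[\omega]^{n-1}$, being a positive multiple of $\int_M\mathrm{Scal}\,dV$, is positive, whence $\lambda>0$ and $c_1(M)>0$.

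\emph{Main obstacle.} The delicate point is extracting the required positivity of the Ricci and scalar curvatures from the \emph{orthogonal} bisectional curvature hypothesis alone: the "diagonal'' curvatures $R(X,\bar X,X,\bar X)$ are not controlled directly, and one must see that in the relevant combinations the uncontrolled terms either disappear (as in the $(1,1)$ Bochner term, where they are multiplied by $(\lambda_\alpha-\lambda_\alpha)^2=0$) or cancel after the averaging in the polarization identity — a cancellation that rests entirely on the K\"ahler symmetries of the curvature tensor. Carrying out that polarization computation and pinning down the precise shape of the $(1,1)$ Bochner term are where the real work lies; the remaining steps are standard Hodge-theoretic bookkeeping.
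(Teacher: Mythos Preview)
Your proposal is correct and overlaps with the paper's argument in most places. For $h^{1,1}=1$ you run the same Bochner computation as the paper (which packages it via the Bochner--Kodaira--Nakano identity for $E=T^*M$), arriving at the identical curvature term $\sum_{\alpha<\beta}R_{\alpha\bar\alpha\beta\bar\beta}(\lambda_\alpha-\lambda_\beta)^2$; and your polarization argument for $H(e_i)+H(e_j)>0$ is exactly the paper's inequality $R_{\alpha\bar\alpha\alpha\bar\alpha}+R_{\beta\bar\beta\beta\bar\beta}>0$, the only cosmetic difference being that you average over all phases $\phi$ while the paper simply adds the two choices $\phi=0$ and $\phi=\pi/2$ (equations (\ref{complex.1})--(\ref{1.5})).

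The one genuine divergence is in how $h^{2,0}=0$ is obtained. The paper first proves $\mathrm{Scal}>0$ and hence $c_1(M)>0$ (via $b_{1,1}=1$), and then invokes Kodaira vanishing to get $H^2(M,\mathcal O_M)=0$. You instead prove the sharper pointwise statement that Ricci is $2$-positive, i.e.\ $r_i+r_j>0$ for every pair of Ricci eigenvalues, and feed this into a direct Bochner argument for holomorphic sections of $\Lambda^2(T^{1,0}M)^*$. Your route is more self-contained (no Kodaira vanishing as a black box) and yields a slightly stronger intermediate conclusion; the paper's route is shorter once $c_1>0$ is established. Both are valid, and your diagnosis of the ``main obstacle''---that the uncontrolled diagonal curvatures $R(X,\bar X,X,\bar X)$ must either drop out (as in the $(1,1)$ Bochner term) or appear only in the combination $H(e_i)+H(e_j)$ handled by polarization---matches precisely where the paper's effort is concentrated.
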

\begin{proof}
  We first prove that $b_{1,1}(M):=\dim H^{1,1}(M,\mb{R})=1$. To do this it suffices to show that every real harmonic $(1,1)$-form $\alpha$ is a real multiple of the K\"{a}hler form $\omega_M$ of $(M,h)$.

  Denote $E:=T^*M$,  the K\"{a}hler metric $h$ induces a Hermitian metric $h^E$ on the holomorphic vector bundle $E$. Let $\n=\n^{1,0}+\b{\p}$ be the Chern connection of the Hermitian holomorphic vector bundle $(E,h^E)$. Let $(\n^{1,0})^*$ (resp. $\b{\p}^*$) be the adjoint operator of $\n^{1,0}$ (resp. $\b{\p}$)  with respect to the K\"{a}hler metric $h$ and the Hermitian metric $h^E$.  Set
  $$\t'=\n^{1,0}(\n^{1,0})^*+(\n^{1,0})^*\n^{1,0},\quad \t''=\b{\p}^*\b{\p}+\b{\p}\b{\p}^*.$$
  Suppose that
  $$\alpha=\sum_{i,j}\alpha_{i\b{j}}d\b{z}^j\wedge dz^i\in A^{0,1}(M,E).$$
  Since $\alpha$ is harmonic, by using Bochner-Kodaira-Nakano identity (cf. \cite{Dem}, Chapter VII, \S 1), we have
  \begin{align}\label{0}
  \begin{split}
    0=\int_{M}\langle \t''\alpha,\alpha\rangle dV_{\omega_M}&=\int_{M}\langle \t'\alpha,\alpha\rangle dV_{\omega_M}+\int_{M}\langle [\sqrt{-1} R^E,\Lambda]\alpha,\alpha\rangle dV_{\omega_M}\\
    &=\int_M |\n^{1,0}\alpha|^2 dV_{\omega_M}+\int_{M}\langle [\sqrt{-1} R^E,\Lambda]\alpha,\alpha\rangle dV_{\omega_M}.
    \end{split}
  \end{align}
   where $dV_{\omega_M}=\frac{\omega_M^n}{n!}$ and $\Lambda$ is the dual operator of the wedge product by $\omega_M$.

   From $R^E=[\n^{1,0},\b{\p}^E]$, we have locally
   \begin{align}
     \langle [\sqrt{-1} R^E,\Lambda]\alpha,\alpha\rangle
     =h^{\b{l}s}h^{\b{j}k}h^{\b{m}i}\alpha_{k\b{l}}\alpha_{i\b{j}}R_{s\b{m}}-h^{\b{l}s}h^{\b{j}k}h^{\b{n}m}h^{\b{t}i}\alpha_{k\b{l}}
     \alpha_{i\b{n}}R_{s\b{t}m\b{j}}.
   \end{align}
   Fix any point $p\in M$, we can choose a local coordinate system around $p$ such that $h_{i\b{j}}=\delta_{ij}$ and $\alpha_{i\b{j}}=a_i\delta_{ij}$ at $p$. Then the positivity of orthogonal bisectional curvature implies that
   \begin{align}\label{1.1}
   \langle [\sqrt{-1} R^E,\Lambda]\alpha,\alpha\rangle(p)=\sum_{i<j}R_{i\b{i}j\b{j}}(a_i-a_j)^2\geq 0.
   \end{align}
   Now by (\ref{0}), we get
   $$\n^{1,0}\alpha=0\quad\text{and}\quad  \langle [\sqrt{-1} R^E,\Lambda]\alpha,\alpha\rangle=0,$$
   and so
   $$\alpha=a\omega_M,$$
   for some real smooth function $a$. Therefore, we obtain $\p a=0$ from $\n^{1,0}\alpha=0$ and $\n^{1,0}\omega=0$, and hence
   $da=0$ by $a$ being real, that means that $a$ is a constant. Finally we get
   \begin{align}\label{1.7}
   b_{1,1}(M)=1.
   \end{align}

    Now we prove that $c_1(M)>0$. Suppose $\{e_{\alpha}\}$ is an orthonormal basis of $T^{1,0}M$ at one point $p\in M$, then for any $\alpha\neq \beta$, we have
  \begin{align}\label{complex.1}
    R(e_{\alpha}-e_{\beta},\o{e_{\alpha}}-\o{e_{\beta}},e_{\alpha}+e_{\beta},\o{e_{\alpha}}+\o{e_{\beta}})=R_{\alpha\b{\alpha}\alpha\b{\alpha}}+R_{\beta\b{\beta}\beta\b{\beta}}-
    R_{\alpha\b{\beta}\alpha\b{\beta}}-R_{\beta\b{\alpha}\beta\b{\alpha}}>0.
  \end{align}
  Similarly, changing $e_{\beta}$ by $\sqrt{-1}e_{\beta}$, we have
  \begin{align}\label{complex.2}
    R_{\alpha\b{\alpha}\alpha\b{\alpha}}+R_{\beta\b{\beta}\beta\b{\beta}}+
    R_{\alpha\b{\beta}\alpha\b{\beta}}+R_{\beta\b{\alpha}\beta\b{\alpha}}>0.
  \end{align}
 Combining (\ref{complex.1}) and (\ref{complex.2}), we obtain that
  \begin{align}\label{1.5}
  R_{\alpha\b{\alpha}\alpha\b{\alpha}}+R_{\beta\b{\beta}\beta\b{\beta}}>0.
  \end{align}
  By (\ref{1.5}), we have the following computations on the scalar curvature $R$ of $(M,h)$,
  \begin{align}\label{1.6}
  \begin{split}
  R&=\sum_{\alpha,\beta}R_{\alpha\b{\alpha}\beta\b{\beta}}
  =\sum_{\alpha}\sum_{\beta\neq\alpha}R_{\alpha\b{\alpha}\beta\b{\beta}}+\sum_{\alpha}R_{\alpha\b{\alpha}\alpha\b{\alpha}}\\
  &=\sum_{\alpha}\sum_{\beta\neq\alpha}R_{\alpha\b{\alpha}\beta\b{\beta}}+\frac{1}{2}\left(\sum_{\alpha=1}^nR_{\alpha\b{\alpha}\alpha\b{\alpha}}+\sum_{\beta=1}^nR_{\beta\b{\beta}\beta\b{\beta}}\right)\\
  &=\sum_{\alpha}\sum_{\beta\neq\alpha}R_{\alpha\b{\alpha}\beta\b{\beta}}+\frac{1}{2}\sum_{\gamma=1}^{n-1}(R_{\gamma\b{\gamma}\gamma\b{\gamma}}+R_{\gamma+1\o{\gamma+1}\gamma+1\o{\gamma+1}})+\frac{1}{2}(R_{n\b{n}n\b{n}}+R_{1\b{1}1\b{1}})\\
  &> 0.
  \end{split}
  \end{align}
By (\ref{1.7}) and $c_1(M)\in H^{1,1}(M,\mb{R})$, we can assume that for some constant $k$,
$$c_{1}(M)=k\omega_M.$$
So we have
$$k\int_M\omega_M^n=\int_{M}c_1(M)\wedge\omega_M^{n-1}=\frac{1}{n}\int_M R\omega_M^n>0,$$
and so $k>0$ and $c_1(M)>0.$
Now by Kodaira vanishing theorem, we get $H^2(M,\mc{O}_M)=0$, i.e., $b_{0,2}(M)=0$. Therefore, $b_2(M)=b_{1,1}(M)+2b_{0,2}(M)=1$.
\end{proof}

\begin{lemma}\label{prop.3}
   Let $M$ be a compact K\"{a}hler manifold with positive orthogonal bisectional curvature. Let $f:\mb{P}^1\to M$ be a nontrivial holomorphic map, and $L$ be a holomorphic quotient line bundle of $f^*TM$.
   Then $c_1(L)>0$.
\end{lemma}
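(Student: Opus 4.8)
The plan is to prove $c_1(L)=\deg L>0$ by separating two cases according to whether the tangent map $df$ survives in the quotient $L$. Write $q\colon f^*TM\to L$ for the given surjection and $K:=\ker q$; since $L$ is locally free and $\mb{P}^1$ is a smooth curve, $K$ is a holomorphic subbundle of $f^*TM$ of rank $n-1$. Consider the composite $\phi:=q\circ df\colon T\mb{P}^1\to L$. Using $T\mb{P}^1\cong\mathcal{O}_{\mb{P}^1}(2)$, if $\phi\not\equiv 0$ then $\phi$ is a nonzero holomorphic section of $L\otimes\mathcal{O}_{\mb{P}^1}(-2)$, which already forces $\deg L\ge 2>0$. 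Thus the substantive case is $\phi\equiv 0$, i.e. $\operatorname{im}(df)\subseteq K$. The effect of this reduction is the crucial one: at every point $w\in\mb{P}^1$ at which $f$ is unramified, the tangent vector $\xi:=f_*(\partial/\partial w)$ lies in $K_w$, and hence is $f^*h$-orthogonal to the line $K_w^{\perp}$.

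In this case I would equip $L$ with the quotient Hermitian metric induced from $f^*h$ via $q$, and invoke the standard comparison of Chern curvatures in an exact sequence of Hermitian holomorphic vector bundles — that the curvature of a quotient bundle dominates that of the ambient bundle (Griffiths' formulas; see \cite{Dem}). This gives, as real $(1,1)$-forms on $\mb{P}^1$,
\[
\frac{\sqrt{-1}}{2\pi}\,\Theta_L\ \ge\ \frac{\sqrt{-1}}{2\pi}\,\big\langle\Theta_{f^*TM}\,v,v\big\rangle ,
\]
where $v$ denotes a local unit section of $K^{\perp}$ (the lift of a unit section of $L$). Since $\Theta_{f^*TM}=f^*\Theta_{TM}$, the right-hand side at an unramified point $w$ is a positive multiple of $R\big(\xi,\bar\xi,v(w),\overline{v(w)}\big)$, which is strictly positive by Definition \ref{defn.1}, because $\xi$ and $v(w)$ are nonzero and orthogonal. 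Integrating over $\mb{P}^1$,
\[
\deg L=\int_{\mb{P}^1}\frac{\sqrt{-1}}{2\pi}\,\Theta_L\ \ge\ \int_{\mb{P}^1}\frac{\sqrt{-1}}{2\pi}\,\big\langle\Theta_{f^*TM}\,v,v\big\rangle\ >\ 0 ,
\]
the last integral being strictly positive since its integrand is a nonnegative $(1,1)$-form that is strictly positive off the finite ramification locus of $f$. This proves $c_1(L)>0$.

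I expect the genuine point to be the case split rather than any of the estimates. Once $\deg L\le 1$ the map $df$ is forced (for degree reasons on $\mb{P}^1$) to factor through $K$, and this is exactly what turns the curvature term appearing in the quotient-bundle comparison into an \emph{orthogonal} bisectional curvature, so that the hypothesis of Definition \ref{defn.1} is enough — had one needed the term $R(\xi,\bar\xi,\xi',\overline{\xi'})$ for an arbitrary direction $\xi'$, one would be back in the setting of full positive bisectional curvature as in Siu--Yau \cite{Siu}. The remaining ingredients — that $K$ (hence $K^{\perp}$) is an honest subbundle over $\mb{P}^1$, the precise sign in the quotient-bundle curvature inequality, and the negligibility of the finite ramification set for the integral — are routine and I do not anticipate difficulty with them.
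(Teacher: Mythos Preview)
Your argument is correct and reaches the same conclusion, but it organises the proof differently from the paper.

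The paper argues by contradiction. Assuming $\deg L\le 0$, it passes to the dual $L^*\hookrightarrow f^*T^*M$, chooses a nonzero holomorphic section $s$ of $L^*$, and then uses the same Liouville-type trick as in the proof of Theorem \ref{thm.1}: with $X=v f^\alpha_w\partial_\alpha$ built from a holomorphic vector field $v$ on $\mb{P}^1$ having zeros, the pairing $\langle s^*,X\rangle$ is shown to be (anti-)holomorphic with zeros, hence identically zero. This supplies the orthogonality $\langle s^*,X\rangle=0$, after which the curvature-decreasing property of subbundles together with positive orthogonal bisectional curvature yields $-k<0$, a contradiction.

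Your route avoids both the dualisation and the auxiliary vector field $v$. The case split on $\phi=q\circ df$ handles the matter more structurally: either $\phi\ne 0$ and degree reasons on $\mb{P}^1$ already force $\deg L\ge 2$, or $\phi\equiv 0$ and then the orthogonality $\langle \xi,v\rangle=0$ is \emph{tautological} (because $\xi\in K$ and $v\in K^\perp$), so the quotient-bundle curvature comparison plus positive orthogonal bisectional curvature gives $\deg L>0$ directly. This is arguably cleaner, and it makes transparent why only \emph{orthogonal} bisectional curvature is needed. What the paper's approach buys is thematic unity: the orthogonality mechanism there is exactly the one used in Theorem \ref{thm.1} (equation \eqref{complex.4}), so the two key steps of the paper share a common device.
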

\begin{proof}
  Since holomorphic line bundles over $\mb{P}^1$ are all of the form $\mc{O}(k)$, $k\in \mb{Z}$, we assume by contradiction that $L=\mc{O}(k)$, $k\leq 0$. Then $L^*=\mc{O}(-k)\geq 0$.  It follows that there exists a nonzero  holomorphic section  $s=s_{\alpha}dz^{\alpha}$ of $L^*$.
  Since $L$ is a holomorphic quotient line bundle of $f^*TM$, it follows that  $L^*$ is a holomorphic subbundle of $f^*T^*M$.
Then
\begin{align}
\begin{split}
\langle\sqrt{-1}f^*R^*s,s\rangle&= \sqrt{-1}(R^*)^{\alpha\b{\beta}}_{~~\gamma\b{\delta}}s_{\alpha}\o{s_{\beta}}f^{\gamma}_w\o{f^{\delta}_w}dw\wedge d\b{w}\\
&=\sqrt{-1}(-h^{\b{\tau}\alpha}h^{\b{\beta}\sigma}R_{\sigma\b{\tau}\gamma\b{\delta}})s_{\alpha}\o{s_{\beta}}f^{\gamma}_w\o{f^{\delta}_w}dw\wedge d\b{w}\\
&=-\sqrt{-1}R_{\sigma\b{\tau}\gamma\b{\delta}}(h^{\b{\beta}\sigma}\o{s_{\beta}})(\o{h^{\b{\alpha}\tau}\o{s_{\alpha}}})f^{\gamma}_w\o{f^{\delta}_w} dw\wedge d\b{w}\\
&=-\sqrt{-1}R_{\sigma\b{\tau}\gamma\b{\delta}}(h^{\b{\beta}\sigma}\o{s_{\beta}})(\o{h^{\b{\alpha}\tau}\o{s_{\alpha}}})vf^{\gamma}_w\o{vf^{\delta}_w} \frac{1}{|v|^2}dw\wedge d\b{w}\\
&=-\sqrt{-1}R(s^*,\overline{s^*},X,\overline{X})\frac{1}{|v|^2}dw\wedge d\b{w},
\end{split}
\end{align}
where  $X:=v\frac{\p f^{\alpha}}{\p w}\frac{\p}{\p z^{\alpha}}$ which is defined by (\ref{XY}), $s^*:=h^{\b{\alpha}\tau}\o{s_{\alpha}}\frac{\p}{\p z^{\tau}}$.

Note that
\begin{align}\label{complex.5}
\left\langle s^*,X\right\rangle\equiv 0,
\end{align}
where the proof of (\ref{complex.5}) is the same as (\ref{complex.4}). By  the positive orthogonal bisectional curvature condition, one has
\begin{align}
  \langle\sqrt{-1}f^*R^*s,s\rangle=-\sqrt{-1}R(s^*,\overline{s^*},X,\overline{X})\frac{1}{|v|^2}dw\wedge d\b{w}<0
\end{align}
 almost everywhere on $\mb{P}^1$.

 Since the curvature is decreasing on  holomorphic subbundle, thus
  \begin{align}
   -k=\int_{\mb{P}^1}c_1(L^*)\leq \int_{\mb{P}^1}\langle\sqrt{-1}f^*R^*s,s\rangle<0,
 \end{align}
  which  contradicts to the assumption $k\leq 0$. Thus $c_1(L)>0$.
\end{proof}

The following proposition is an analogue of Siu-Yau's Proposition 3 in \cite{Siu}.
\begin{prop}\label{prop.4}
  Let $M$ be a compact K\"{a}hler manifold with positive orthogonal bisectional curvature. Let $C_0$ be a rational curve in $M$ and $f_0:\mb{P}^1\to C_0$ be its normalization.
  Then there exists a proper subvariety $Z$ of $P(TM)$ with the following property. If $y\in M$ and $\xi\in (TM)_y-0$ define an element of $P(TM)-Z$, then there exists a holomorphic
  map $f:\mb{P}^1\to M$ homotopic to $f_0$ ( when $f_0$ is regarded as a map from $\mb{P}^1$ to $M$ ) such that $y$ is a regular point of $f(\mb{P}^1)$ and the tangent vector of $f(\mb{P}^1)$ at
  $y$ is  a  nonzero multiple of $\xi$.
\end{prop}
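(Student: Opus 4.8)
The plan is to follow the strategy of Siu--Yau's Proposition 3 in \cite{Siu}, replacing the use of positive holomorphic bisectional curvature by Lemma \ref{prop.3} at the crucial point where positivity of quotient line bundles of $f^*TM$ is invoked. First I would set up the parameter space of holomorphic maps $g:\mb{P}^1\to M$ homotopic to $f_0$; since $c_1(M)>0$ by Proposition \ref{Pro 1}, this is a space whose relevant deformation theory is controlled by $H^0(\mb{P}^1, g^*TM)$ and $H^1(\mb{P}^1,g^*TM)$. The key input is that for any nontrivial holomorphic $g:\mb{P}^1\to M$, every quotient line bundle of $g^*TM$ has positive degree by Lemma \ref{prop.3}; splitting $g^*TM=\bigoplus_i \mc{O}(a_i)$ by Grothendieck's theorem, this forces every $a_i\geq 1$, in particular $H^1(\mb{P}^1,g^*TM)=0$, so the space of such maps is smooth (unobstructed) of the expected dimension, and $g^*TM$ is globally generated.

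Next I would construct the subvariety $Z$. Consider the incidence variety $\mc{M}$ of pairs $(g, \text{point data})$ and the natural evaluation-type map to $P(TM)$ sending a map $g$ together with a point $w\in\mb{P}^1$ to the element of $P(TM)$ determined by $(g(w), dg(w))$ — this requires $w$ to be a point where $dg\neq 0$, which holds off a finite set since $g$ is nonconstant holomorphic. Because $g^*TM$ is globally generated with all summands of positive degree, the differential of this evaluation map is surjective at generic points, so its image contains a Zariski-open dense subset of $P(TM)$; I would take $Z$ to be the complement of (the smooth locus of) this image, together with the locus of points that are not regular points of the corresponding $f(\mb{P}^1)$. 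One must check $Z$ is a proper subvariety: this follows from dimension counting, using that the expected dimension of the space of maps is large enough (again a consequence of all $a_i\geq 1$ and $c_1(M)>0$) that the evaluation map is dominant.

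Then, given $(y,\xi)$ defining a point of $P(TM)-Z$, by construction there is a holomorphic $f:\mb{P}^1\to M$ homotopic to $f_0$ and a point $w_0$ with $f(w_0)=y$ and $df(w_0)$ a nonzero multiple of $\xi$; the condition $(y,\xi)\notin Z$ also guarantees $y$ is a regular point of $f(\mb{P}^1)$, i.e. $f$ is an immersion near $w_0$ and the image is smooth there. Finally I would verify the homotopy claim: the space of holomorphic maps homotopic to $f_0$ is (a union of) connected components of the full mapping space, and any map produced by deforming within $\mc{M}$ stays in the component of $f_0$.

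The main obstacle I expect is the dimension count showing $Z$ is a \emph{proper} subvariety of $P(TM)$, rather than all of it — equivalently, that the evaluation map is dominant. This is exactly where Lemma \ref{prop.3} does the work: the positivity of all splitting factors $a_i$ of $g^*TM$ gives both $h^0(\mb{P}^1,g^*TM)=\sum(a_i+1)$ large and $h^1=0$, so the moduli space of maps has dimension $\geq n+ \deg(g^*TM)\geq n + n = 2n$ while $P(TM)$ has dimension $2n-1$, leaving enough room for dominance after quotienting by the $3$-dimensional group $\mathrm{Aut}(\mb{P}^1)$ and accounting for the choice of $w_0$. Making the surjectivity of the differential of the evaluation map precise — rather than merely a dimension inequality — is the technical heart, and here one uses global generation of $g^*TM$ together with the fact that one can prescribe both a value and a first-order jet by choosing suitable global sections, since each $\mc{O}(a_i)$ with $a_i\geq 1$ separates $1$-jets at a point.
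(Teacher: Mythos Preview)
Your proposal is correct and follows essentially the same approach as the paper. The paper's proof is in fact even terser: it simply states that the argument is identical to Siu--Yau's Proposition~3, except that Lemma~\ref{prop.3} replaces Griffiths positivity of $TM$ at exactly two places --- (i) the vanishing $H^1(V,N_V)\cong H^1(\mb{P}^1,f_0^*TM)=0$ via Grothendieck splitting with all factors positive, and (ii) the positivity of the quotient factors $Q_2,\ldots,Q_n$ of $f_0^*TM/(T\mb{P}^1\otimes[E])$ --- both of which you capture via the statement that every $a_i\geq 1$. One small remark: your heuristic dimension count is off by one (after quotienting by $\mathrm{Aut}(\mb{P}^1)$ and adding the marked point you get $\geq 2n-2$, not $\geq 2n-1$); the missing unit comes from $T\mb{P}^1\otimes[E]\hookrightarrow f_0^*TM$ forcing $\max a_i\geq 2$, but as you correctly note, the actual argument rests on surjectivity of the differential of the evaluation map rather than a bare dimension inequality.
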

\begin{proof}
  The proof is basically the same as Proposition 3 in \cite{Siu} except at two places where they required $TM$ to be positive in the sense of Griffiths, both of which follow from Lemma \ref{prop.3}.

  The first one is the proof of $H^1(V, N_V)=0$:

 Let $V\subset \mb{P}^1\times M$ denote the graph of $f_0$, where $f_0$ is holomorphic map. Let $\pi:\mb{P}^1\times M\to\mb{P}^1$, $\sigma:\mb{P}^1\times M\to M$ be the natural projections. Since $T({\mb{P}^1}\times M)$ is isomorphic to
  $\pi^*T\mb{P}^1\oplus\sigma^* TM$ and $TV$ is isomorphic to $\pi^*T\mb{P}^1$, it follows that $\sigma^*TM|V$ is isomorphic to the normal bundle $N_V$ of $V$ in $\mb{P}^1\times M$, that is,
  $f_0^* TM$ is isomorphic to $N_V$. By a theorem of Grothendieck \cite{Gro}, $f_0^*TM$  splits into a sum of holomorphic line bundles $L_1,\ldots, L_n$. By Lemma \ref{prop.3}, each
  $L_i$ is a positive holomorphic line bundle over $V$. Then by the theorem of Riemann-Roch, $H^1(V,L_i)=0$ for each $i$, then $H^1(V,f^*_0 TM)=0$. Since $f^*_0TM$ is isomorphic to $N_V$, it follows that
  $H^1(V,N_V)=0$.

  The second one is that when $f^*_0TM/(T\mb{P}^1\otimes [E])$ splits into a direct sum of holomorphic line bundles $Q_2,\ldots,Q_m$ over $\mb{P}^1$, then each $Q_i$ is positive line bundle, and this fact also follows from Lemma \ref{prop.3} directly.
\end{proof}
\bigskip
We are now ready to prove Theorem 0.1.\\
\vskip 3mm
\emph{ Proof:}
By Proposition \ref{Pro 1},  $M$ is a compact K\"{a}hler manifold with $c_1(M)>0$, by a theorem of Yau \cite{Yau}, there exists a K\"{a}hler metric with positive Ricci form and so $M$ is simply connected by Theorem A of Kobayashi in \cite{Kob}.
Thus $\pi_2(M)$ and $H_2(M,\mb{Z})$ are isomorphic. On the other hand,  $b_2(M)=1$ by Proposition \ref{Pro 1}. Now by the universal coefficient theorem, we have $H^2(M,\mb{Z})\cong \mb{Z}$. Hence, there exists a positive holomorphic line bundle $F$ over $M$ whose first Chern class $c_1(F)$ is a generator of $H^2(M,\mb{Z})$. Let $g$ be the generator of the free part of $H_2(M,\mb{Z})$ such that
$c_1(F)[g]=1$. Since $H_2(M)$ is isomorphic to $\pi_2(M)$, we regard $g$ also as an element of $\pi_2(M)$. By the famous theorem of Sacks-Uhlenbeck (cf. \cite{Sacks}, \cite{Siu}), there exist minimal harmonic maps
$f_i:\mb{P}^1\to M$, $0\leq i\leq k$, such that
\begin{enumerate}
  \item the element of $\pi_2(M)$ is defined by $\sum_{i=0}^k f_i$ is $g$ and
  \item $\sum_{i=0}^k E(f_i)$ equals the infimum of $E(h)$ for all maps $h:\,\mb{P}^1\to M$ which define the element $g$ in $\pi_2(M)$.
\end{enumerate}

By Theorem \ref{thm.1}, each $f_i\, (0\leq i\leq k)$ is either holomorphic or conjugate holomorphic. So each $f_i(\mb{P}^1)(0\leq i\leq k)$ is a rational curve. Since $c_1(TM)$ is a positive integral multiple of
$c_1(F)$ and the value of $c_1(F)$ at $g$ is 1, it follows that at least one $f_i$ is holomorphic. If $k>0$, then at least one $f_j$ is conjugate holomorphic. We distinguish now between two cases.

{\bf Case 1.} $k=0$.

The line bundle $T\mb{P}^1\otimes [E]$ is a subbundle of $f^*_0 TM$ and the quotient bundle $(f^*_0TM)/(T\mb{P}^1\otimes[E])$ splits into a direct sum of line bundles $Q_2,\ldots,Q_n$, where $E$ is the divisor of the differential $df_0$ of $f_0$ and $[E]$
is the corresponding line bundle over $\mb{P}^1$ associated to the divisor $E$. By Lemma \ref{prop.3}, each $Q_i\, (2\leq i\leq n)$ is a positive line bundle. It follows that
$$c_1(f^*_0TM)=c_1(\mb{P}^1)+c_1([E])+\sum_{i=2}^nc_1(Q_i).$$
Hence $c_1(TM)$ evaluated at $g$ is $\geq n+1$. That is, $c_1(TM)=\lambda c_1(F)$ for some integer $\lambda\geq n+1$. By the result of Kobayashi-Ochiai \cite{Kob1},
$M$ is biholomorphic to $\mb{P}^n$.

{\bf Case 2.} $k>0$.
Without loss of generality we can assume that $f_0$ is holomorphic and $f_1$ is conjugate holomorphic.
By Proposition \ref{prop.4} and the same proof as Siu-Yau \cite{Siu}, \S 6, we get a contradiction. Hence $k=0$ and $M$ is biholomorphic to $\mb{P}^n$.

\end{document}